\documentclass{article}
\usepackage[a4paper, total={6in, 9in}]{geometry}
\usepackage{graphicx} 
\usepackage{amsmath,amscd,mathtools,cases,amsthm,amsopn,amssymb,soul,cleveref,mathabx,relsize,stmaryrd}
\usepackage{tikz}
\usepackage{xcolor,chngcntr}
\counterwithin{figure}{section}
\usetikzlibrary{calc}
\tikzset{bzf/.style={circle, draw, fill=blue!30,inner sep=2pt}, minimum size=2pt,inner sep=2pt}
\tikzset{wzf/.style={circle, draw,inner sep=2pt}}
\tikzstyle{vertex}=[circle, draw, inner sep=0pt, minimum size=4pt,fill=black]

\tikzstyle{whitevertex}=[circle, draw, inner sep=0pt, minimum size=4pt,fill=white]
\tikzstyle{bluevertex}=[circle, draw, inner sep=0pt, minimum size=4pt,fill=black]

\tikzstyle{hollowvertex}=[circle, draw, inner sep=1pt, minimum size=4pt, fill=white]

\tikzstyle{phantomvertex}=[circle, draw, inner sep=0pt, minimum size=4pt,color=white]

\usetikzlibrary{backgrounds}
\usetikzlibrary{arrows.meta}
\tikzset{
  .../.tip={[sep=2pt 2]
    Round Cap[]. Circle[length=0pt 2,sep=2pt] Circle[length=0pt 2,sep=2pt] Circle[length=0pt 2, sep=2pt 2]}}
\tikzset{
  .../.tip={[sep=2pt 2]
    Round Cap[]. Circle[length=0pt 2,sep=2pt] Circle[length=0pt 2,sep=2pt] Circle[length=0pt 2, sep=2pt 2]}}
\tikzset{
      ellipsis/.tip={
Square[length=2pt,sep=0pt,color=white] Circle[length=1pt,sep=0pt,color=black] Square[length=1pt,sep=0pt,color=white]
Circle[length=1pt,sep=0pt,color=black] Square[length=1pt,sep=0pt,color=white]
Circle[length=1pt,sep=0pt,color=black] Square[length=2pt,sep=0pt,color=white]}}
\tikzset{middlearrow/.style n args={2}{
        decoration={markings,
            mark= at position {#2} with {\arrow{#1}} ,
        },
        postaction={decorate}
    }
}
\usetikzlibrary{positioning}

\newtheorem{theorem}[subsection]{Theorem}

\newtheorem{observation}[subsection]{Observation}
\newtheorem{proposition}[subsection]{Proposition}
\newtheorem{lemma}[subsection]{Lemma}

\theoremstyle{definition}
\newtheorem{definition}[subsection]{Definition}

\newtheorem{example}[subsection]{Example}

\DeclareMathOperator{\Z}{Z}

\usepackage{authblk}

\usepackage{lineno}

\title{Fault Tolerant Zero Forcing}
\author[1]{Asher Brown}
\author[1]{Mark Hunnell}
\author[1]{Za'Kiya Toomer-Sanders}
\author[1]{Sarah Weber}
\affil[1]{Winston-Salem State University}
\begin{document}
\maketitle

\begin{abstract}
   Zero forcing is an iterative graph coloring process studied for its wide array of applications. In this process, the vertices of the graph are initially designated as blue or white, and a zero forcing set is a set of initially blue vertices that results in all vertices becoming blue after repeated application of a color change rule. The zero forcing number of a graph is the minimum cardinality of a zero forcing set. The zero forcing number has motivated the introduction of a host of variants motivated by linear-algebraic or graph-theoretic contexts. We define a variant we term the $k$-fault tolerant zero forcing number, which is the minimum cardinality of a set $B$ such that every subset of $B$ of cardinality $|B|-k$ is a zero forcing set.  We study the values of this parameter on various graph families, the behavior under several graph operations, and characterize the 1-fault tolerant zero forcing number of trees.
\end{abstract}

\noindent \textbf{Keywords:} zero forcing, fault tolerant, graph operations, path cover\\
\noindent \textbf{MSC Classification:} 05C50, 05C69

\section{Introduction}

Zero forcing on a graph was introduced in \cite{AIM2008} to bound the maximum nullity obtained in space of matrices defined by the graph.  We delay formal definitions until \Cref{sec:prelim}, but informally the goal of zero forcing is to determine the smallest size of a set of vertices that when these vertices are the only vertices initially colored blue, the remaining vertices all become blue after a finite number of applications of the zero forcing color change rule.

Since its introduction, there has been significant interest in zero forcing and a host of related parameters that are defined using a similar paradigm.  Though first studied for its applications to the problem of determining the minimum rank of a symmetric matrix associated with a graph \cite{AIM2008, allison2010minimum, parameters2013,ekstrand2013psdzf}, zero forcing and its variants have also been studied for their applications to quantum control \cite{quantum}, the spread of disease and information \cite{DREYER2009}, and monitoring electric grids\cite{haynes2002domination}. The latter application is related to the placement of phase-monitoring units (PMUs) in an electrical power network, and thus it has motivated investigation into variations of the parameter when PMUs are allowed to fail \cite{bjorkman2023robust,pai2010restricted}.

The property of robustness when searching a graph is a desirable property, and in this article we introduce a fault tolerant variant of zero forcing and study its properties.  While the leaky-forcing variant of zero forcing studied in \cite{abbas2025,alameda2024leaky,alameda2023gen} seeks to provide robustness of the zero forcing process when certain vertices are not allowed to contribute to the zero forcing process, our parameter is determined by the behavior of all subsets of a certain size within a prescribed set of vertices.  Thus our parameter is novel and distinct.

In \cite{KENTER2019124}, zero forcing is related to the solving of linear systems, and leaky forcing was defined as a parameter to study the resilience of solving linear or physical systems where a leak at a node prevents the information available at that from being able to be used reliably in deducing state of other vertices in the system. In contrast, the fault tolerant zero forcing parameter studied herein corresponds to a loss of information at a node (or nodes) but not the reliability of information about the node through the rest of the system.

Much is known about the zero forcing number, including the values of the parameter for many families of graphs, as well as its behavior under various graphs operations, see \cite{hogben2022book} for a detailed account. Additionally, other parameters of a graph derived directly from the zero forcing number and the zero forcing color change rule have been investigated.  Examples include the propagation time \cite{hogben2012prop} of a graph and the throttling number \cite{carlson2019throttling} of a graph. We thus have many tools available for studying the fault tolerant zero forcing parameter.  

The remainder of the article is organized as follows. We give the foundational definitions in \Cref{sec:prelim}, as well as make a number of observations useful to the study of fault tolerant zero forcing.  \Cref{sec:families} is devoted to determining the fault tolerant zero forcing number for several common graph families. A general lower bound on the fault tolerant zero forcing number is given in \Cref{sec:pathcovers}, where we also characterize the fault tolerant zero forcing number for trees. Finally, the effect of various graph operations on the fault tolerant zero forcing number is explored in \Cref{sec:ops}.

\section{Preliminaries}\label{sec:prelim}

Let $G$ be a simple graph (i.e. undirected with no loops or multiple edges) and denote its set of vertices by $V(G)$ and its edges by $E(G)$.  We denote by $uv$ the edge of $G$ whose endpoints are the vertices $u,v \in V(G)$, while $\{u,v\}$ denotes the subset of $V(G)$ with these vertices.  For $V\subseteq V(G)$, we denote the subgraph of $G$ induced by $V$ by $G[V]$.  We follow the notation in \cite{hogben2022book}, and in particular for a graph with $n$ vertices, we denote path graphs by $P_n$, cycle graphs by $C_n$, and complete graphs by $K_n$. The complete bipartite graph with partite sets of $m$ and $n$ is denoted $K_{m,n}$. We denote the degree of a vertex $v \in V(G)$ by $\deg_G(v)$ or $\deg(v)$ if the context is clear. A vertex $v$ is called universal if $\deg(v)=n-1$, and $v$ is called a leaf if $\deg(v)=1$.  Wheels graphs $W_n$ are constructed by adding a universal vertex to $C_{n-1}$. The symbols $N(v)$ and $N[v]$ denote the open and closed neighborhoods of $v$, respectively. Two vertices $u$ and $v$ of $G$ are twins if $N_G(u)=N_G(v)$.

We choose an initial set of vertices $B\subseteq V(G)$ to designate as blue, and the complement of $B$ in $V(G)$, denoted $W$, are designated white.  The zero forcing color change rule allows a blue vertex $b \in B$ to change the color of a white vertex $w \in W$ to blue if $w$ is the unique white neighbor of $b$.  In this case, we say that $b$ forces $w$ and this is denoted by $b \rightarrow w$.  The zero forcing color change rule is applied iteratively until no more forces are possible.  We say that the initial set of blue vertices $B$ is a zero forcing set for $G$ if eventually all of $V(G)$ is colored blue under repeated application of the color change rule, and $B$ is a failed zero forcing set otherwise.  An example is shown in \Cref{zfs}.

\begin{figure}[ht]
\centering
\begin{tikzpicture}[scale=.75]
		\node [bzf] (1) at (0, 0) { $1$};

		\node [bzf] (2) at (1, -1) { $2$};

		\node [wzf] (3) at (1, 1) { $3$};

		\node [wzf] (4) at (2, 0) {$4$ };

		\node [wzf] (5) at (3.5, 0) {$5$ };
		\node [ ] (a) at (1,-1.75) {(a)};

\draw[thick] (1) -- (2) -- (4) -- (3) -- (1);
\draw[thick] (4)--(5);

\begin{scope}[shift= { ( 6,0)}]
		\node [wzf] (1) at (0, 0) {$1$ };

		\node [bzf] (2) at (1, -1) {$2$ };

		\node [bzf] (3) at (1, 1) {$3$ };

		\node [wzf] (4) at (2, 0) {$4$ };

		\node [wzf] (5) at (3.5,0) {$5$ };
				\node  (a) at (1,-1.75) {$(b)$};

\draw[thick] (1) -- (2) -- (4) -- (3) -- (1);
\draw[thick] (4)--(5);

\end{scope}
\end{tikzpicture}
\caption{A minimum zero forcing set for $G$ in $(a)$, and a failed zero forcing set for $G$ in $(b)$.}
\label{zfs}
\end{figure}

Clearly, one could choose $B=V(G)$ to obtain a zero forcing set, so the interesting cases are when $B$ contains as few vertices as possible.  This leads to the definition of $\Z(G)$, the zero forcing number of a graph $G$.

\begin{definition}
    Let $G$ be a graph.  The zero forcing number of $G$, denoted $\Z(G)$, is the minimum cardinality of a zero forcing set for $G$.
\end{definition}

A zero forcing set can be thought of as a set of vertices such that if that status of the vertices in the set are known, then the status of every vertex in the graph is determined. For instance, in a space of matrices described by the graph, if the values of entries in a null vector corresponding to a zero forcing set are chosen to be zero, the null vector must be the zero vector.  See \cite{hogben2022book} for full details.

By introducing a robustness requirement, as in our next definition, we ensure that a change in status of one or more of the vertices in a zero forcing set does not change the deterministic nature of the set.

\begin{definition}
Let $G$ be a graph and $B$ a subset of the vertices $V(G)$.  Suppose $B$ has $m$ elements, i.e. $\vert B \vert = m$.  Then $B$ is a fault tolerant zero forcing set if every subset $F \subset B$ such that $\vert F \vert = m-1$ is also a zero forcing set.
Similarly, $B$ is $k$-fault tolerant if every subset $F \subset B$ such that $\vert F \vert = m-k$ is also a zero forcing set.
\end{definition}

\begin{definition} Let $G$ be a graph.  The $k$-fault tolerant zero forcing number of $G$ is the minimum cardinality of a $k$-fault tolerant zero forcing set for $G$.  i.e.
\[ \Z_{t}^k(G)  = \min\left\{ \vert B \vert \text{ such that } B \text{ is $k$-fault tolerant}  \right\}        \]
\end{definition}

When $k=1$, we will use $\Z_t(G)$ to denote $\Z_t^1(G)$. Let $B$ be a $k$-fault tolerant set and $S \subset B$ such that $|S|=k$.  When discussing the forcing process of $B\setminus S$, we refer to $v\in S$ as a fault.

In the remainder of this section, we will make some simple observations about the fault tolerant zero forcing number.  Though each is quite straight forward, they are nevertheless useful in later sections.

First, we observe that the fault tolerant zero forcing number operates independently over connected components of graphs.
\begin{observation}
    If $G$ is a graph, its connected components are $G_1, G_2, \dots, G_r$, and $\Z_t^k(G_i)$ exists for $i=1,2,\dots,r$, then $$\Z_t^k(G) = \displaystyle \sum_{i=1}^r \Z_t^k(G_i).$$
\end{observation}

In light of this observation, there is no loss of generality in considering only connected graphs. It is natural to look for bounds on the fault tolerant zero forcing number $\Z_t^k(G)$ in terms of the zero forcing number $\Z(G)$, and our next observation establishes such a bound.

\begin{observation}\label{lowbound}
Let $G$ be a graph and suppose $\Z_t^k(G)$ exists.  Then $\Z_t^k(G) \geq \Z(G) + k$. Moreover, \[\Z_t^k(G) > \Z_t^{k-1}(G) > \dots > \Z_t(G) > \Z(G).\]
\end{observation}

We will show in \Cref{sec:families} that several common graph families, such as paths, stars, and complete graphs, have equality with this lower bound when $k=1$. Note that these families all have extreme zero forcing numbers relative to their order.  When the zero forcing number of a graph is lower, such as cycles and wheels, inequality is obtained.  See \Cref{sec:families} for more details.

If a graph $G$ contains two minimum zero forcing sets with no vertices in common, then choosing the union of those sets provides a fault tolerant zero forcing set and establishes that $\Z_t(G) \leq 2\Z(G)$, but in practice this is not a very useful bound.

\begin{definition}
    Let $G$ be a graph and $B$ a zero forcing set of $G$.  We say $B$ is a minimal zero forcing set if for all $v\in B$, $B \setminus \{v\}$ is not a zero forcing set of $G$.
\end{definition}

An important distinction exists between minimum zero forcing sets and minimal zero forcing sets.  To illustrate the difference, we consider the path on $8$ vertices, labeled as in \Cref{P7}.  There are exactly two minimum zero forcing sets, namely $\{v_1\}$ and $\{v_7\}$, and these sets are also minimal.  There are four additional minimal zero forcing sets of $G$, namely $\{v_2,v_3\}$, $\{v_3,v_4\}$, $\{v_4,v_5\}$, and $\{v_5,v_6\}$.

\begin{figure}
    \centering
    \begin{tikzpicture}
        \node[wzf] (a) at (0,0) {$v_1$};
        \node[wzf] (b) at (1.5,0) {$v_2$};
        \node[wzf] (c) at (3,0) {$v_3$};
        \node[wzf] (d) at (4.5,0) {$v_4$};
        \node[wzf] (e) at (6,0) {$v_5$};
        \node[wzf] (f) at  (7.5,0) {$v_6$};
        \node[wzf] (g) at (9,0) {$v_7$};
        \draw (a) -- (b) -- (c) -- (d) -- (e) -- (f) -- (g);
    \end{tikzpicture}
    \caption{The graph $P_7$.}
    \label{P7}
\end{figure}

    Every fault tolerant zero forcing set contains a minimal zero forcing set, but not necessarily a minimum zero forcing set, as the next example demonstrates.

\begin{example}\label{ex:mummal}
    \begin{figure}[ht]
        \centering
        \begin{tikzpicture}[scale=2]
            \node[wzf] (1) at (0,0) {$1$};
            \node[wzf] (2) at (0,-1) {$5$};
            \node[wzf] (3) at (-.718,-.5) {$3$};
            \node[wzf] (4) at (1,-1) {$6$};
            \node[wzf] (5) at (1,0) {$2$};
            \node[wzf] (6) at (1.718,-0.5) {$4$};

            \draw[thick] (1) -- (3);
            \draw[thick] (2) -- (3);
            \draw[thick] (2) -- (4);
            \draw[thick] (4) -- (6);
            \draw[thick] (5) -- (6);
            \draw[thick] (1) -- (5);
            \draw[thick] (1) -- (2);
            \draw[thick] (4) -- (5);
        \end{tikzpicture}
        \caption{The graph in \Cref{ex:mummal}.}
        \label{fig:mummal}
    \end{figure}
    Consider the graph \Cref{fig:mummal}. The minimum zero forcing sets are $\{1,3\}, \ \{2,4 \}, \{3,5 \},$ and $\{4,6\}$ The set of vertices $B=\{1,2,5,6\}$ is a minimum fault tolerant set, but $B$ does not contain any of the minimum zero forcing sets.
\end{example}

The study of zero forcing and its variants has been augmented by the introduction of the notion of forts, which are obstructions to zero forcing sets. Specifically, a fort is a nonempty subset $F \subseteq V(G)$ such that for all $v \notin F, |N_G(v) \cap F| \neq 1$. Intuitively, a fort is a set of unfilled vertices such that when its complement if filled, no more forces are possible. 

\begin{theorem}[\cite{brimkov2019forts},\cite{fast2018forts}]\label{thm:zforts}
    Let $G$ be a graph. A set of vertices $S \subseteq V(G)$ is a zero forcing set if and only if $S$ intersects every fort of $G$ nontrivially.
\end{theorem}

\Cref{thm:zforts} extends naturally to the vertex fault tolerant case.

\begin{observation}\label{obs:fortint}
    A set of vertices $S \subseteq V(G)$ is a fault tolerant zero forcing set if and only if $S\cap F \geq k$ for every fort $F$ of $G$.
\end{observation}

\begin{proposition}\label{prop:smallforts}
    Let $G$ be a graph.  Then:
    \begin{enumerate}
        \item\label{prop:isolates} The only forts of $G$ of cardinality 1 are isolated vertices.
        \item\label{prop:twins} The forts of $G$ of cardinality 2 are precisely the sets of (adjacent or independent) twins.
    \end{enumerate}
        \end{proposition}
\begin{proof}
    \Cref{prop:isolates} follows from the fact if $\{v\}$ is a fort, then any neighbor $u$ of $v$ satisfies $|N_G(u)\cap \{v\}|=1$, and therefore $v$ has no neighbors. For \Cref{prop:twins}, it is clear that a pair of twins is a fort of cardinality 2. If $\{u,v\}$ is fort, then $N_G(u)=N_G(v)$ since any vertex $w$ in one neighborhood but not the other would satisfy $|N_G(w) \cap \{u,v\}|=1$.
\end{proof}

For every graph $G$, the standard zero forcing number $\Z(G)$ always exists.  However, when fault tolerance is enforced, the existence of $\Z_t^k(G)$ is not guaranteed. \Cref{prop:smallforts} and \Cref{obs:fortint} allow us to characterize the existence of $\Z_t^k(G)$ for $k\leq 2$.

\begin{theorem}\label{thm:exist}
Let $G$ be a graph. Then:
\begin{enumerate}
    \item\label{obs:dne}
    $\Z_t(G)$ exists if and only if $G$ has no isolated vertices.
    \item $\Z_t^2(G)$ exists if and only if $G$ has neither isolated vertices nor twins.
    \item If $u,v$ are twins of $G$, then $\{u,v\}$ is a subset of every fault tolerant zero forcing set.
\end{enumerate}
\end{theorem}

Note that in particular, \Cref{thm:exist} implies that if $k\geq 2$, then $\Z_t^k(G)$ does not exist if $G$ is a complete graph or complete bipartite graph.



\section{Graph Families}\label{sec:families}
We now establish values of the fault tolerant zero forcing number for graphs which belong to a common family of graphs.  The values of the zero forcing number for these graph families were established in \cite{AIM2008}.
\begin{proposition}\label{thm:pn}
Let $G = P_n$ be a path on $n\geq 2$ vertices.  Then $\Z_t(G) = 2$. If $n\geq 4$, then $\Z_t^2(G)=4$.
\end{proposition}

\begin{proof}
Let $G$ be a path on $n\geq2$ vertices.  Then $\Z(G)=1$ and by the \Cref{lowbound}  we have $\Z_t(G) \geq \Z(G) + 1  = 2$.  By choosing $B$ to consist of both endpoints, we are guaranteed that every subset of size 1 of $B$ is a zero forcing set, and thus $B$ is a fault tolerant zero forcing set.  Since this achieves the lower bound, we have $\Z_t(G) = 2$.

Now suppose $n\geq 4$.  The graph $G$ contains no 2-fault tolerant sets of size 3, since if the two vertices of degree 1 are white there are no zero forcing sets of size 1.  Clearly if $B$ consists of a pair of adjacent degree 2 vertices and the degree 1 vertices, then $B$ is a 2-fault tolerant set since it contains 3 disjoint zero forcing sets.
\end{proof}

\begin{proposition}\label{thm:kn}
If $G=K_n$ for $n\geq 2$, then $\Z_t(G) = n$.
\end{proposition}
\begin{proof}
    Let $G=K_n$ for some $n \geq 2$. Then $\Z(K_n) = n-1$ for all $n\geq 2$, and any set of $n-1$ vertices is a minimum zero forcing set.  Therefore, the set of all $n$ vertices is a fault tolerant zero forcing set, and thus $\Z_t(K_n) \leq n$.  We have $\Z_t(K_n) \geq n$ by \Cref{lowbound}.
\end{proof}

\begin{proposition}\label{thm:stars}
    If $G=K_{1,n}$ for $n\geq 3$, then $\Z_t(G) = n$.
\end{proposition}
\begin{proof}
    $\Z(G) = n-1$, and all minimum zero forcing sets are composed of all but one leaf.  Thus, the set of all leaves is a 1-fault tolerant set, so $\Z_t(G) \leq n$.  We have $\Z_t(G) \geq n$ by \Cref{lowbound}.
\end{proof}

\begin{proposition}\label{thm:Kmn}
    If $G=K_{m,n}$ for  $n\geq  m\geq 2$, then $\Z_t(G) = m+n$.
\end{proposition}
\begin{proof}
    Let $G=K_m,n$ for some $n\geq  m\geq 2$. The set $B=V(G)$ is a fault tolerant zero forcing set of $G$ since only one force is required for each $B\setminus\{v\}$, so $\Z_t(G)\leq m+n$.

    Suppose there is a smaller fault tolerant zero forcing set $S$ of $G$.  Denote the partite sets of $G$ by $A$ and $B$, then coloring the vertices of $S$ blue leaves at least one white vertex $v$ initially, assume without loss of generality $v\in A$.  There is always a vertex $w \in S$ such there are two white vertices $v,p \in A$ after setting the vertices of $S\setminus \{w\}$ blue.  Since $N_G(v)=N_G(p)$, neither $v$ now $p$ can be forced and $S\setminus \{w\}$ is a failed zero forcing set.
\end{proof}

\begin{proposition}\label{thm:cycle}
    If $G=C_n$ for $n\geq 4$, then $\Z_t(G) = 4$. 
\end{proposition}
\begin{proof}
    The minimum (and minimal) zero forcing sets of $C_n$ consist of pairs of adjacent vertices, thus $\Z(C_n)=2$ for $n\geq 3$.  By \Cref{lowbound}, we have that $\Z_t(C_n) \geq 3$.  However, no set of three vertices guarantees that two adjacent vertices remain after one is removed.  A path of length 4 in $C_n$ preserves this property, and this is only possible if $n\geq 4$.
\end{proof}

\begin{proposition}\label{thm:wheels}
    If $G = W_n$ is a wheel on $n\geq 6$ vertices, then $\Z_t(G) = 5$.
\end{proposition}
\begin{proof}
   Let $G=W_n$ for $n \geq 6$. It was shown in \cite{Fallat2014MinimumRM} that $\Z(G)=3$. Furthermore, up to isomorphism there are two minimum zero forcing sets, and these are precisely the minimal zero forcing sets. The first minimum zero forcing set is $B_1 = \{v_1,v_2,v_3\}$, where $\deg(v_1) = \deg(v_2) = 3$, $v_1v_2 \in E(G)$, and $\deg(v_3) = n-1$, while the second is $B_2 = \{w_1, w_2, w_3\}$, where $\deg(w_1) = \deg(w_2) = \deg(w_3) =3$ and $w_1w_2,w_2w_3 \in E(G)$. By \Cref{lowbound}, we have $\Z_t(G)\geq 4$. We will show there is no $B$ such that $|B| = 4$ is a fault tolerant zero forcing set and construct a fault tolerant zero forcing set $B$ such that $|B| = 5$.
    
   Suppose $B$ is a fault tolerant zero forcing set of $G$ and $|B|=4$.  Then $B$ must contain either $B_1$ or $B_2$ as a subset. Suppose $B = B_1 \cup \{ v_4\}$. If $v_4 \not\in N (\{{v_1, v_2}\})$, then $B\setminus \{v_1\} = \{v_2, v_3, v_4\}$ and each of these vertices has two white neighbors, and thus no forces are possible. If $v_4 \in N(\{{v_1, v_2}\})$, then assume without loss of generality that $v_1v_4 \in E(G)$. Again, every vertex of $B\setminus \{v_1\} = \{{v_2, v_3, v_4}\}$ has two white neighbors and no forces are possible. Therefore, there are no fault tolerant zero forcing sets of cardinality four that contain $B_1$.

    Now suppose $B = B_2 \cup \{ w_4\}$.  If $w_4$ is the universal vertex, we have already shown that $B$ is not fault tolerant.  If $w_iw_4 \not \in E(G)$ for $i=1,2,3$, then no forces are possible from $B\setminus \{w_1\}$, so without loss of generality assume $w_3w_4 \in E(G)$.  In this case $B\setminus \{w_2\}$ is not a zero forcing set, so $B$ cannot be fault tolerant.  Thus $\Z_t(G) \geq 5$.
    
    Now suppose $B = \{{v_1, v_2, v_3, v_4, v_5}\}$ where $G[\{{v_2, v_3, v_4, v_5}\}] \cong P_4$ and $v_1$ is a universal vertex. Consider $B\setminus \{{v_i}\}$ if $i = 2,3,4,5$ then $B\setminus \{v_i\}$ contains $B_1$ and is thus a zero forcing set. Similarly, $B\setminus \{v_1\}$ contains $B_2$, so it is a zero forcing set. Therefore  $B$ is a fault tolerant zero forcing set and $\Z_t(G) = 5$.
\end{proof}

\section{Compatible collections of path covers and applications to trees}\label{sec:pathcovers}

\subsection{Compatible path covers}

The path cover number of a graph provides an important upper bound on the zero forcing number of the graph, and in this section we derive the corresponding parameter for fault tolerant zero forcing.

\begin{definition}
    A path cover of a graph $G$ is a collection of induced paths $\{ P_i\}_{i=1}^r$ such that every vertex of $G$ lies in exactly one path $P_i$.
    The path cover number of a graph $G$, denoted $\mathrm{P}(G)$, is the minimum number of paths in a path cover of $G$.
\end{definition}

Given a zero forcing set $B$ of a graph $G$, during the forcing process it may be possible that more than one blue vertex is able to force the same white vertex.  Thus while the final set of blue vertices is unique for each initially blue set (whether it a zero forcing set or not), the forces used to obtain this final set of blue of vertices may not be. It is often convenient to work with a fixed set of forces, which leads to the next (well-known) definition.

\begin{definition}
    Let $G$ be a graph and $B$ a zero forcing set of $G$.  A sequence of forces $v_i \rightarrow v_{i+1}$ listed in the order in which they occur is called a chronological list of forces $\mathcal{F}$ for $B$ in $G$.  We omit  $B,G$ when they are clear from the context, and denote by $\mathcal{F} \setminus u\rightarrow v$ the list of forces obtained by deleting $u\rightarrow v$ from $\mathcal{F}$.
\end{definition}

Given zero forcing set $B$ of a graph $G$, a chronological list of forces $\mathcal{F}$ for $B$ in $G$ specifies a path cover of $G$.  Each path in the path cover is of the form $G[\{v_1,v_2, \dots , v_r\}]$, where $v_i \rightarrow v_{i+1} \in \mathcal{F}$, $v_1\in B$ and $v_r$ does not perform a force in $\mathcal{F}$.  These are induced paths since each of the forces in $\mathcal{F}$ is valid. In particular, this is true for minimum zero forcing sets, which leads to the following lower bound on the zero forcing number of a graph.

\begin{theorem}[\cite{HOGBEN2010}, Theorem 2.13]\label{thm:pzg}
    Let $G$ be a graph.  Then $ \mathrm{P}(G) \leq \Z(G) $.
\end{theorem}

If $B$ is a fault tolerant zero forcing set, however, each $B\setminus \{v\}$ where $v \in B$ is a zero forcing set, and thus each vertex of $B$ induces a path cover of $G$. This suggests that collections of path covers are the natural object to consider in the fault tolerant case.  However, a number of subtleties must be addressed before we define the relevant parameter. 

\begin{definition}
    Let $G$ be a graph and $\mathcal{P}=\{P_i\}_{i=1}^r$ a path cover of $G$. Define $L(\mathcal{P})$ to be the endpoints of paths in $\mathcal{P}$, that is $L(\mathcal{P}) =  \{ v \in V(T) \ | \ \deg_{\mathcal{P}}(v) \leq 1 \}$.  If $\mathfrak{P}$ is a set of path covers, then $L(\mathfrak{P})= \bigcup_{\mathcal{P}\in \mathfrak{P}} L(\mathcal{P})$.
\end{definition}

\begin{definition}\label{def:compatiblecover}
    Let $G$ be a graph and $v\in V(G)$. A set $\mathfrak{P}=\{ \mathcal{P}_1, \mathcal{P}_2, \dots,\mathcal{P}_r\}$ of path covers of $G$  is called compatible for $S\subseteq L(\mathfrak{P})$ if for every $v\in S$ both of the following hold:
    \begin{enumerate}
        \item $\deg_{\mathcal{P}_i}(v) \geq 1$ for all $i \in \{1,2,\dots, r\}$, and

        \item for each $u_i \in L(\mathcal{P}_i)$ such that $\deg_{\mathcal{P}_i}(v)=1$, there exists $\mathcal{P}_j$ such that $\deg_{\mathcal{P}_j}(v) =2$ or  $\mathrm{comp}_{\mathcal{P}_j}(u_i) \neq \mathrm{comp}_{\mathcal{P}_j}(v)$ and $\deg_{\mathcal{P}_j}(u_i)\geq 1$.
         \end{enumerate}
    
\end{definition}
While the definition of a compatible collection is quite technical, the idea is that if a collection is compatible for $S$, then the vertices of $S$ can always be forced from two different path covers and is not required to initiate forcing on path. Thus $S$ is not required in a minimal fault tolerant zero forcing set. We begin with a trivial example to show how compatible collections generalize path covers, before exhibiting a more interesting case.
\begin{example}
    Consider the graph $G$ in \Cref{P7}. Since $G$ is a path, trivially $\mathcal{P}=\{G\}$ is path cover of $G$ and $\mathfrak{P}=\{\mathcal{P}\}$ is a compatible collection for each vertex in $\{v_2,v_3, \dots, v_5\}$ since $\deg(v_i)=2$ for $2\leq i \leq 5$.  However, $\deg(v_1) = \deg(v_7)=1$ and $v_1$ and $v_7$ are in the same component for every $\mathcal{P} \in \mathfrak{P}$, so $\mathfrak{P}$ is not compatible for $v_1$ and $v_7$.
\end{example}
\begin{figure}[ht]
    \centering
    \begin{tikzpicture}[scale=1.75,thick]
\tikzstyle{every node}=[minimum width=12pt, inner sep=2pt, circle]
    \draw (-0.59,2.35) node[draw] (0) {$8$};
    \draw (-0.59,1.69) node[draw] (1) {$1$};
    \draw (-1.29,1.69) node[draw] (2) {$2$};
    \draw (0.09,1.69) node[draw] (3) {$3$};
    \draw (-0.92,1.06) node[draw] (4) {$4$};
    \draw (-0.27,1.06) node[draw] (5) {$5$};
    \draw (-1.29,0.49) node[draw] (6) {$6$};
    \draw (0.09,0.49) node[draw] (7) {$7$};
    \draw  (0) edge (1);
    \draw  (1) edge (3);
    \draw  (1) edge (2);
    \draw  (1) edge (4);
    \draw  (1) edge (5);
    \draw  (3) edge (5);
    \draw  (4) edge (5);
    \draw  (2) edge (4);
    \draw  (4) edge (6);
    \draw  (5) edge (7);

\end{tikzpicture}
    \caption{The graph $E_1$ in \Cref{ex:E1}.}
    \label{fig:E1H7}
\end{figure}

\begin{example}\label{ex:E1}
    Consider the Graph $E_1$ in \Cref{fig:E1H7}. The collection $\mathfrak{P} = \{ \mathcal{P}_1,\mathcal{P}_2,\mathcal{P}_3,\mathcal{P}_4\}$, where each $\mathcal{P}_i$ is given in \Cref{fig:E1pcs}, is a set of path covers compatible for $\{1,3,4,5\}$.\begin{figure}[ht]
    \centering
        \begin{tikzpicture}[scale=1.25,thick]
\tikzstyle{every node}=[minimum width=10pt, inner sep=1pt, circle]
    \draw (-0.59,2.35) node[draw] (0) {$8$};
    \draw (-0.59,1.69) node[draw] (1) {$1$};
    \draw (-1.29,1.69) node[draw] (2) {$2$};
    \draw (0.09,1.69) node[draw] (3) {$3$};
    \draw (-0.92,1.06) node[draw] (4) {$4$};
    \draw (-0.27,1.06) node[draw] (5) {$5$};
    \draw (-1.29,0.49) node[draw] (6) {$6$};
    \draw (0.09,0.49) node[draw] (7) {$7$};
    \draw[thick,red!50] (0) --(1)--(2);
    \draw[thick,red!50] (6)--(4);
    \draw[thick,red!50] (7)--(5)--(3);
    \node (a) at (-.59,.2) {$\mathcal{P}_1$};

    \begin{scope}[shift={(3,0)}]
        \draw (-0.59,2.35) node[draw] (0) {$8$};
    \draw (-0.59,1.69) node[draw] (1) {$1$};
    \draw (-1.29,1.69) node[draw] (2) {$2$};
    \draw (0.09,1.69) node[draw] (3) {$3$};
    \draw (-0.92,1.06) node[draw] (4) {$4$};
    \draw (-0.27,1.06) node[draw] (5) {$5$};
    \draw (-1.29,0.49) node[draw] (6) {$6$};
    \draw (0.09,0.49) node[draw] (7) {$7$};
    \draw[thick,red!50] (0) --(1);
    \draw[thick,red!50] (6)--(4)--(2);
    \draw[thick,red!50] (7)--(5)--(3);
    \node (a) at (-.59,.2) {$\mathcal{P}_2$};
    \end{scope}
    \begin{scope}[shift={(6,0)}]
        \draw (-0.59,2.35) node[draw] (0) {$8$};
    \draw (-0.59,1.69) node[draw] (1) {$1$};
    \draw (-1.29,1.69) node[draw] (2) {$2$};
    \draw (0.09,1.69) node[draw] (3) {$3$};
    \draw (-0.92,1.06) node[draw] (4) {$4$};
    \draw (-0.27,1.06) node[draw] (5) {$5$};
    \draw (-1.29,0.49) node[draw] (6) {$6$};
    \draw (0.09,0.49) node[draw] (7) {$7$};
    \draw[thick,red!50] (0) --(1)--(3);
    \draw[thick,red!50] (6)--(4)--(2);
    \draw[thick,red!50] (7)--(5);
    \node (a) at (-.59,.2) {$\mathcal{P}_3$};
    \end{scope}
    \begin{scope}[shift={(9,0)}]
        \draw (-0.59,2.35) node[draw] (0) {$8$};
    \draw (-0.59,1.69) node[draw] (1) {$1$};
    \draw (-1.29,1.69) node[draw] (2) {$2$};
    \draw (0.09,1.69) node[draw] (3) {$3$};
    \draw (-0.92,1.06) node[draw] (4) {$4$};
    \draw (-0.27,1.06) node[draw] (5) {$5$};
    \draw (-1.29,0.49) node[draw] (6) {$6$};
    \draw (0.09,0.49) node[draw] (7) {$7$};
    \draw[thick,red!50] (0) --(1)--(3);
    \draw[thick,red!50] (7)--(5)--(4)--(6);
    \node (a) at (-.59,.2) {$\mathcal{P}_4$};
    \end{scope}

\end{tikzpicture}
\caption{A minimum compatible collection for $E_1$ in \Cref{ex:E1}.}
    \label{fig:E1pcs}
\end{figure}
\end{example}

\begin{definition}Given a compatible collection of path covers $\mathfrak{P}$ for $S$, the irredundant  vertices of $\mathfrak{P}$ are $I(\mathfrak{P}) = L(\mathfrak{P}) \setminus S$.

    A collection of path covers $\mathfrak{P}$ of a graph $G$ is a minimum compatible collection for $G$ if $|I(\mathfrak{P})| \leq |I(\mathfrak{Q})|$ for every collection of path covers $\mathfrak{Q}$ of $G$. The cardinality of a minimum compatible collection for $G$ is called the irredundance number of $G$ is denoted by $\mathrm{I}(G)$.
\end{definition}

Our next result shows that the irredundance number of a graph plays the same role in fault tolerant zero forcing that the path cover number plays in standard zero forcing.

\begin{theorem}\label{thm:IltZt}
    If $G$ is a graph for which $\Z_t(G)$ exists, then $\mathrm{I}(G) \leq \Z_t(G)$.
\end{theorem}
\begin{proof}
    Let $B$ be a minimum fault tolerant set of $G$, then for each $v \in B$, $B\setminus \{v\}$ is a zero forcing set of $G$. Let $B'_v$ be a minimal zero forcing set of $G$ such that $B'_v \subseteq B \setminus \{v\}$. Then $B'_v$ yields a path cover $\mathcal{P}_v$ of $G$. Let $\mathfrak{P} = \{ \mathcal{P}_v\}_{v \in B}$. We claim $\mathfrak{P}$ is a compatible collection for each $w \not \in B$.

    Consider $w \not \in B$. Since $w$ must be forced, we have $\deg_{\mathcal{P}_v}(w) \geq 1$ for every $v \in B$. If $w \not \in L(\mathfrak{P})$, then $\deg_{\mathcal{P}_v}(w) \geq 2$ for all $v \in B$ and $\mathfrak{P}$ is compatible for $w$. So suppose $w \in L(\mathfrak{P})$. For every $\mathcal{P}_v$ such that $\deg_{\mathcal{P}_v}(w)=1$, the other endpoint of $\mathrm{comp}_{\mathcal{P}_v}(w)$ must be in $B \setminus\{v\}$, and thus $\mathfrak{P}$ is compatible for $w$.  Therefore $\mathfrak{P}$ is compatible for every $w \not \in B$, and since $I(\mathfrak{P})= |B|$, we have the result.
\end{proof}

Returning to \Cref{ex:E1}, we see that $I(\mathfrak{P}) = \{1,2,7,8\}$. This is indeed a fault tolerant zero forcing set, and it is minimal by \Cref{thm:IltZt} (or \Cref{lowbound} and the fact that $\Z(E_1)=3$).

In general, determining whether a set of compatible path covers for a graph $G$ is minimum is difficult and is a topic that requires further study. All path covers can be enumerated and thus an algorithm exists to test minimality, but this is not efficient to implement.  Some simplifications exist, however, and as an example we offer the next result.

\begin{lemma}
    If $\mathfrak{P}$ is a minimum compatible collection for $G$ and $v \in L(\mathfrak{P})$ is isolated in every path cover of $\mathfrak{P}$, then $\mathfrak{P}-v$ is a minimum compatible collection of $G-v$. In particular, under these hypotheses $I(G) = I(G-v)+1$
\end{lemma}
\begin{proof}
    Suppose to the contrary that $\mathfrak{Q}$ is a compatible collection for $G-v$ and $I(\mathfrak{Q}) < I(\mathfrak{P})-1$. Since $v$ was irredundant in $\mathfrak{P}$, adding $v$ to each path cover of $\mathfrak{Q}$ yields a compatible collection for $G$ with cardinality less than $I(\mathfrak{P})$, a contradiction.
\end{proof}

\subsection{Trees}
Trees are connected acyclic graphs, and we previously dealt with the special cases of $P_n$ and $K_{1,n}$. In this section we give an easily computable bound on the fault tolerant zero forcing number of a tree $T$, then show that the sets of compatible path covers of defined earlier in this section determine $\Z_t(T)$.

When we restrict our attention to trees, the inequality of \Cref{thm:IltZt} becomes equality, and establishing this is the primary goal of this section. We first recall the corresponding result in the standard zero forcing case.

\begin{theorem}[\cite{param}]\label{thm:pz}
    Let $T$ be a tree.  Then $\mathrm{P}(T) = \Z(T)$.
\end{theorem}

In fact, there is a correspondence between (minimum) zero forcing sets of a tree $T$ and (minimum) path covers of $T$, obtained by taking one endpoint of each path in a path cover.  This yields a zero forcing set of $T$, and all zero forcing sets are obtained in this way (see, for example, the proof of Proposition 4.2 in \cite{AIM2008}).

We denote by $\ell(T)$ the number of vertices of degree 1 in a tree $T$, and refer to $\ell(T)$ as the leaf number of $T$. For leaky forcing, it was shown in \cite[Proposition 3.2]{alameda2024leaky} that the set of leaves of a tree with an edge is the unique 1-leaky forcing set of $T$. This implies the next result, but we provide our own proof for self containment.

\begin{proposition}\label{thm:leaves}
    The set of leaves of a tree $T \not\cong K_1$ is a fault tolerant set of $T$.  Hence, \\ $\Z_t(T) \leq \ell(t)$.
\end{proposition}
\begin{proof}

We know from \cite{geneson2020reconfiguration} that every tree has a zero forcing set that consists entirely of leaves.  We must show that any set of all but one leaf is a zero forcing set of $T$. 

For trees, a zero forcing set is obtained by choosing one endpoint of every path in a path cover of $T$. Let $B=\{ v \in V(T) \ \vert \ v \text{ is a leaf of } T \}$.  We consider $B\setminus \{v\}$ for some $v \in B$ and construct a path cover such that each path contains an endpoint in $B\setminus \{v\}= \{ w_1, w_2 , \dots, w_r\}$. Note that $r\geq 1$ since $T\not\cong K_1$.  Furthermore, since $T$ is a tree there is unique path between any two vertices of $T$.  Let ${P}_1$ be the unique path between $w_1$ and $v$. For $j \in \{2,3,\dots,r\}$, let $P_j$ be the maximum induced subpath of the unique path from $w_j$ to $v$ that $V(P_j)\cap (\cup_{i=1}^{j-1} V(P_i)) = \emptyset$. Clearly each path in $\mathcal{P} = \{P_1, P_2, \dots, P_r\}$ contains an endpoint.  To see that $\mathcal{P}$ is a path cover of $T$, suppose $u \in V(T)$ s.t. $u \not\in V(P_i)$ for every $i \in \{1,2,\dots, r\}$. Then $u$ cannot be a leaf, so $\deg_T(u)\geq 2$. Consider $x,y \in N_T(u)$ such that $x\neq y$, then there exists a unique path $Q_1$ to $v$ and a unique path $Q_2$ from $y$ to $v$.  Then $T[V(Q_1)\cup V(Q_2)\cup\{u\}]$ is a cycle, a contradiction. Therefore $\mathcal{P}$ is a path cover of $T$ where every path contains at least one vertex of $B\setminus \{v\}$ as an endpoint. Thus $B\setminus \{v\}$ is a zero forcing set.

\end{proof}

The star $K_{1,n}$ and the path $P_n$ show that the bound is tight.  Inequality can be achieved, however, as the next example shows.
\begin{example}\label{ineq}
    Let $G$ be the graph depicted in \Cref{counter}.  There are five leaves and thus $\ell(G)=5$, but $\Z_t(G) \leq 4$, as evidenced by the initially blue set $B = \{ 1,2,6,7 \} $. 
\end{example}

\begin{figure}[ht]
    \centering
    \begin{tikzpicture}
        \node[wzf] (1) at (0,0) {$1$};
        \node[wzf] (2) at (0,-2) {$2$};
        \node[wzf] (3) at (1,-1) {$3$};
        \node[wzf] (4) at (3,-1) {$4$};
        \node[wzf] (5) at (5,-1) {$5$};
        \node[wzf] (6) at (6,0) {$6$};
        \node[wzf] (7) at (6,-2) {$7$};
        \node[wzf] (8) at (3,0) {$8$};

        \draw[thick] (1) -- (3)-- (4) -- (5) -- (6);
        \draw[thick] (2) -- (3);
        \draw[thick] (7) -- (5);
        \draw[thick] (8) -- (4);
        
    \end{tikzpicture}
    
    \caption{The tree $T$ in \Cref{ineq} and \Cref{ex:mftpc}.}
    \label{counter}
\end{figure}

\begin{theorem}
\label{thm:endpoints}
    Let $T$ be a tree with at least two vertices. Then $\Z_t(T) = |I(T)|$.
\end{theorem}
\begin{proof}
    In light of \Cref{thm:IltZt}, we need only show that $I(T) \geq \Z_t(T)$.
    Thus, let $T$ be a tree and $\mathfrak{P} = \{\mathcal{P}_i\}_{i=1}^r$ a minimum compatible collection for $T$. Let $B=I(\mathfrak{P})$. We claim that $B$ is a fault tolerant zero forcing set of $T$, and proceed by induction on the order of $T$. When the order of $T$ is 2, we have $T = K_2$. There are two path covers of $K_2$ with the same endpoints, and thus we have $I(\mathfrak{P})=V(K_2)$ which is clearly a fault tolerant set. Now assume the result holds for trees of order up to $|T|-1$.
    
    For some $v \in B$, we consider $B \setminus \{v\}$. As stated previously, it is well-known that taking one endpoint of any path cover of $T$ yields a zero forcing set of $T$. Therefore if $\deg_{\mathcal{P}_i}(v) =1$ for some $i \in \{1,2,\dots, r\}$, then $B\setminus \{v\}$ contains an endpoint of every path in $\mathcal{P}_i$ and is thus a zero forcing set of $T$.  Now suppose $v$ is isolated in every path cover $\mathcal{P}_i \in \mathfrak{P}$. 


    Let $w$ be a neighbor of $v$ in $T$.  If $w\in B$ (and therefore $B\setminus \{v\}$) then for each $\mathcal{P}_i$, $w$ is either isolated or contained in a component with both endpoints. If $w$ is isolated in some $\mathcal{P}_i$, then since $B \setminus\{v\}$ is a zero forcing set of $T-v$, $w$ is able to force $v$ after the color change rule is applied to $T-v$, so $B\setminus \{v\}$ is a zero forcing set of $T$.  Now suppose $w$ is never isolated in $\mathfrak{P}$. Then $\deg_{\mathcal{P}_i}(w) \geq 1$ for all $i$. If $\deg_{\mathcal{P}_i}(w)=2$ for all $i$ then $w \not \in I(\mathfrak{P})$ as hypothesized, thus there exists some $i$ such that $\deg_{\mathcal{P}_i}(w) = 1$, and the other endpoint of $\mathrm{comp}_{\mathcal{P}_i}(w)$ is in $B\setminus \{v\}$.  Then as before, $w$ can force $v$ after all forces are carried out in $T-v$.

    If $ w \not \in B$, then $\mathfrak{P}$ is compatible for $w$. If there exists some $i$ such that $\deg_{\mathcal{P}_i}(w) = 1$, and the other endpoint of $\mathrm{comp}_{\mathcal{P}_i}(w)$ is in $B\setminus \{v\}$.  So suppose $\deg_{\mathcal{P}_i}(w)=2$ for all $i$. Then both endpoints of $\mathrm{comp}_{\mathcal{P}_i}(w)$ are in $B\setminus \{v\}$, so we can choose a forcing starting from one endpoint of $\mathrm{comp}_{\mathcal{P}_i}(w)$ and terminating at $w$, while the other endpoint initiates a forcing process coloring the remainder of $\mathrm{comp}_{\mathcal{P}_i}(w)$. Then as before, $w$ can force $v$ after all forces are carried out in $T-v$.

    Thus we have shown that $B$ is a fault tolerant zero forcing set, and therefore $\Z_t(T) \leq I(T)$.

\end{proof}

\begin{example}\label{ex:mftpc}
    Let $T$ be the tree in \Cref{counter}. Then $\mathfrak{P}= \{\mathcal{P}_1,\mathcal{P}_2\}$, where $\mathcal{P}_1 = \{ T[\{1,3,4,8\}] ,\\ T[\{5,6,7\}], T[\{2\}] \}$ and $\mathcal{P}_2 = \{ T[\{ 1,2,3\}], T[\{4,5,6,8\}], T[\{7\}]\}$ (shown in \Cref{fig:mftpc}) is a set of compatible path covers for the vertices $\{3,4,5,8\}$ in $T$. Furthermore, $\mathfrak{P}$ is a minimum compatible collection for $T$, and thus $I(\mathfrak{P}) = \{1,2,6,7\}$ is a minimum fault tolerant set of $T$ and $\Z_t(T)=4$.
\end{example}

\begin{figure}[ht]
    \centering
    \begin{tikzpicture}[scale=0.75]
        \node[wzf] (1) at (0,0) {$1$};
        \node[wzf] (2) at (0,-2) {$2$};
        \node[wzf] (3) at (1,-1) {$3$};
        \node[wzf] (4) at (3,-1) {$4$};
        \node[wzf] (5) at (5,-1) {$5$};
        \node[wzf] (6) at (6,0) {$6$};
        \node[wzf] (7) at (6,-2) {$7$};
        \node[wzf] (8) at (3,0) {$8$};

        \draw[thick,red!50] (1) -- (3)-- (2);
        \draw[thick,red!50] (8) -- (4)--(5)--(6);

        \begin{scope}[shift={(10,0)}]
        \node[wzf] (1) at (0,0) {$1$};
        \node[wzf] (2) at (0,-2) {$2$};
        \node[wzf] (3) at (1,-1) {$3$};
        \node[wzf] (4) at (3,-1) {$4$};
        \node[wzf] (5) at (5,-1) {$5$};
        \node[wzf] (6) at (6,0) {$6$};
        \node[wzf] (7) at (6,-2) {$7$};
        \node[wzf] (8) at (3,0) {$8$};

        \draw[thick,red!50] (7) -- (5) -- (6);
        \draw[thick,red!50] (1) -- (3) -- (4) -- (8);
        \end{scope}
        
    \end{tikzpicture}
    
    \caption{Two path covers of the tree $T$ in \Cref{ineq}.}
    \label{fig:mftpc}
\end{figure}

\section{Graph Operations} \label{sec:ops}

We now consider the effect of deleting a vertex or an edge from a graph on the fault tolerant zero forcing number.  For a graph $G$ and $e$ an edge of $G$, denote by $G-e$ the graph obtained by deleting $e$ from $G$ and by $G\slash e$ the graph obtained by contracting along $e$.  For a vertex $v$ of $G$, denote by $G-v$ the graph obtained by deleting $v$ and any incident edges to $v$ from $G$.

For zero forcing, this was studied in \cite{edholm2012} and \cite{owens}.  We begin by recalling the results therein.

\begin{theorem}[\cite{edholm2012},\cite{owens}]
    Let $G$ be a graph, $e$ an edge of $G$, and $v$ a vertex of $G$.  
    \begin{enumerate}
        \item $-1 \leq \Z(G) -\Z(G-e) \leq 1$
        \item $-1 \leq \Z(G) - \Z(G\slash e) \leq 1$
        \item $-1 \leq \Z(G) -\Z(G-v) \leq 1$
    \end{enumerate}
\end{theorem}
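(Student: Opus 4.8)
The plan is to prove each of the three statements by \emph{transferring a zero forcing process} between the two graphs, paying for the modification by adding at most one vertex to a minimum zero forcing set. Throughout I would use the standard fact that whether a vertex set is a zero forcing set does not depend on the order in which the forces are performed (see \cite{AIM2008}), so that any particular force may be postponed.

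The cleanest case is vertex deletion (part 3). For $\Z(G)\le\Z(G-v)+1$, I would take a minimum zero forcing set $B$ of $G-v$ and claim $B\cup\{v\}$ works in $G$: replaying the $G-v$ process, the only neighbour a forcer can gain in $G$ is $v$, which is blue from the start, so every force stays valid and the set fills $V(G-v)$, while $v$ is already blue. For the reverse bound $\Z(G-v)\le\Z(G)+1$, start from a minimum zero forcing set $B$ of $G$ and a chronological list of forces. The vertex $v$ performs at most one force, say $v\to x$; set $B'=(B\setminus\{v\})\cup\{x\}$ (drop $x$ if $v$ never forces), so $B'\subseteq V(G-v)$ and $|B'|\le\Z(G)+1$. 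Replaying the list in $G-v$ and skipping the force into $v$ and the force $v\to x$, every remaining force $w\to y$ is valid, because at the moment $w$ forced $y$ in $G$ every neighbour of $w$ other than $y$ was blue — in particular $v$ — so deleting $v$ leaves $y$ the unique white neighbour of $w$.

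Edge deletion (part 1) follows the same template with $e=uv$. For $\Z(G)\le\Z(G-e)+1$, take a minimum zero forcing set $B$ of $G-e$, let $u$ be whichever endpoint is filled \emph{later} in a fixed $G-e$ process (either one if both lie in $B$), and use $B'=B\cup\{u\}$ in $G$. Since the extra edge $uv$ can obstruct only forces performed by $u$ or $v$, and $u\in B'$ is blue throughout, the only danger is a force by $u$ while $v$ is white; but in the chosen process $v$ is filled before $u$ is, hence before $u$ ever forces, and the forces producing $v$ are performed by vertices other than $u$, so postponing all of $u$'s forces until $v$ is blue yields a valid process. The reverse bound $\Z(G-e)\le\Z(G)+1$ is dual: from a minimum zero forcing set $B$ of $G$, if a chronological list contains $u\to v$ take $B'=B\cup\{v\}$ (symmetrically for $v\to u$, and $B'=B$ if neither endpoint forces the other); deleting $e$ only shrinks neighbourhoods, so every force transfers except the now-impossible $u\to v$, which is simply skipped because $v\in B'$.

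Edge contraction (part 2) uses the same idea but is where I expect the real work. Writing $z$ for the vertex of $G/e$ obtained by identifying the endpoints of $e=uv$, so that $N_{G/e}(z)=(N_G(u)\cup N_G(v))\setminus\{u,v\}$, the bound $\Z(G/e)\le\Z(G)+1$ would come from a minimum zero forcing set $B$ of $G$ by replacing any $u$ or $v$ in $B$ with $z$ and adding $z$ as a safeguard, and the bound $\Z(G)\le\Z(G/e)+1$ from a minimum zero forcing set of $G/e$ by replacing $z$ with $u$ and adding $v$. In both directions one replays a forcing process, but now $z$ behaves like a \emph{merge} of $u$ and $v$: the crux is to declare $z$ blue in $G/e$ exactly when the statuses of $u$ and $v$ in the $G$-process agree — for instance only once \emph{both} are blue — and to postpone any force whose validity depends on the colour of $z$ until that point. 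Making this bookkeeping precise, and checking that no force is left stranded, is the step I expect to require the most care; the deletion cases are essentially immediate once one commits to adding the affected vertex and invokes order-independence to reroute the few forces that break.
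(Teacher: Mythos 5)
The paper does not prove this theorem at all: it is recalled from the literature with citations to \cite{edholm2012} and \cite{owens}, so there is no in-paper argument to compare against and your proposal has to stand on its own. On that basis, parts (1) and (3) are correct and essentially complete. The vertex-deletion argument ($B\cup\{v\}$ in one direction; $(B\setminus\{v\})\cup\{x\}$ with the force $v\to x$ and the force into $v$ deleted in the other) and the edge-deletion argument (add the endpoint filled later, and observe that the only forces the extra edge can obstruct are those performed by $u$ or $v$) are the standard transfer arguments, and your invocation of order-independence to justify postponing forces is the right supporting fact.

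Part (2) has a genuine gap, which you partly acknowledge but which is worse than ``bookkeeping.'' For $\Z(G/e)\le\Z(G)+1$ your plan is to replay a $G$-process with $z$ declared blue once both $u$ and $v$ are blue, postponing forces ``whose validity depends on the colour of $z$.'' But the real obstruction is forces performed \emph{by} $u$ or $v$: if $u\to b$ occurs in $G$, the corresponding force $z\to b$ in $G/e$ need not be valid, because $z$ also inherits $N_G(v)$, which may still contain white vertices other than $b$. Postponing $z\to b$ is not harmless, since every later force that required $b$ to be blue is then stranded; and if the $G$-process contains both $u\to b$ and $v\to d$ with $b\neq d$, then $z$ can perform at most one force in $G/e$, so at least one of $b,d$ must be supplied some other way. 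The known repair is to place the forced vertex itself into the modified set (compare how this paper's own \Cref{ztgslashe} adds $c$ and $d$ to $B^{\prime}$ in the fault-tolerant analogue), and the case analysis showing this still fits within a single extra vertex --- exploiting that $\varphi$ identifies $u$ and $v$, so $|\varphi(B)|$ may already be smaller than $|B|$ --- is exactly the content you have deferred. As written, the contraction bounds are asserted, not proved.
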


The proof of these results relies on \Cref{zfb}, which is no longer true for fault tolerant zero forcing.
\begin{theorem}[\cite{barioli2010}]\label{zfb}
Let $G$ be a graph.  If $G$ is connected and has at least two vertices, then no vertex is in every minimal zero forcing set.
\end{theorem}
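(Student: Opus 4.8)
The plan is to prove the logically equivalent assertion: for every vertex $v \in V(G)$ there is a minimal zero forcing set of $G$ that does not contain $v$.

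First I would verify that $V(G) \setminus \{v\}$ is itself a zero forcing set. Since $G$ is connected and has at least two vertices, $\deg(v) \ge 1$, so $v$ has a neighbor $u$. Coloring every vertex except $v$ blue, the vertex $v$ is the only white vertex and hence the unique white neighbor of $u$; therefore $u \rightarrow v$ and all of $V(G)$ becomes blue. Thus $V(G) \setminus \{v\}$ is a zero forcing set that avoids $v$.

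Second, I would pass from $V(G)\setminus\{v\}$ to a minimal zero forcing set contained in it. As long as the current set is not minimal, the definition of a minimal zero forcing set hands us a vertex whose deletion still leaves a zero forcing set; deleting such vertices one at a time strictly decreases cardinality, so after finitely many steps we arrive at a minimal zero forcing set $B$ with $B \subseteq V(G) \setminus \{v\}$. In particular $v \notin B$, so $v$ is not in every minimal zero forcing set, and since $v$ was arbitrary the theorem follows.

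I do not expect a genuine obstacle: the whole argument hinges on the first step, which is also where both hypotheses are used — connectivity together with $|V(G)| \ge 2$ is exactly what guarantees that the lone white vertex $v$ has a blue neighbor available to force it, and both hypotheses are necessary (a single vertex, or two isolated vertices, violate the conclusion). The only mild point to keep in mind is that shrinking a given zero forcing set to a minimal one inside it is always possible because vertex sets are finite — the same shrinking principle that underlies \Cref{obs:minimal}.
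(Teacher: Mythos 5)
Your proof is correct. Note that the paper does not actually prove \Cref{zfb} --- it is stated as a citation to \cite{barioli2010} --- so there is no internal proof to compare against; your argument stands on its own as a complete, elementary proof of the statement as written. The two steps are both sound: connectivity together with $|V(G)|\ge 2$ guarantees the lone white vertex $v$ has a blue neighbor, so $V(G)\setminus\{v\}$ forces; and greedy single-vertex deletion from a finite zero forcing set must terminate at a set satisfying the paper's definition of minimality (it cannot reach the empty set, since $G$ has a vertex). One caveat worth flagging: this easy argument works precisely because the theorem speaks of \emph{minimal} zero forcing sets. The paper later invokes \Cref{zfb} in the stronger form ``no vertex is in every \emph{minimum} zero forcing set,'' and that version does not follow from your shrinking step, since the minimal set you produce inside $V(G)\setminus\{v\}$ need not have minimum cardinality; the minimum-cardinality version requires the forcing-chain reversal machinery of the cited reference. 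For the theorem as literally stated, your proof suffices.
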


For example, we will show that all twins are required in every fault tolerant zero forcing set in \Cref{ztgminusv}.  Thus, looser bounds hold in the fault tolerant case. 

\begin{definition}
    Let $G$ be a graph and $B$ a zero forcing set of $G$.  A sequence of forces $v_i \rightarrow v_{i+1}$ listed in the order in which they occur is called a chronological list of forces $\mathcal{F}$ for $B$ in $G$.  We omit  $B,G$ when they are clear from the context, and denote by $\mathcal{F} \setminus u\rightarrow v$ the list of forces obtained by deleting $u\rightarrow v$ from $\mathcal{F}$.
\end{definition}

\begin{theorem}\label{thm:ftgme}
    Let $G$ be a graph such that $\Z_t(G)$ exists and $e$ an edge of $G$ whose deletion leaves no isolated vertices. Then
    
        $$-2 \leq \Z_t(G) -\Z_t(G-e) \leq 2.$$

\end{theorem}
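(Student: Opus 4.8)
The plan is to establish the two inequalities $\Z_t(G) - \Z_t(G-e) \leq 2$ and $\Z_t(G-e) - \Z_t(G) \leq 2$ separately, in each case by taking a fault tolerant zero forcing set witnessing the smaller parameter and adding at most two vertices to produce a fault tolerant zero forcing set for the other graph. The hypothesis that $G-e$ has no isolated vertices is needed to guarantee that $\Z_t(G-e)$ exists; I would note that $G$ connected already ensures $\Z_t(G)$ exists, and that $G - e$ having no isolated vertex means every component of $G-e$ has at least one edge, so $\Z_t$ is defined on each component and hence on $G-e$.

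For the bound $\Z_t(G-e) \leq \Z_t(G) + 2$, let $e = uv$ and let $B$ be a minimum fault tolerant zero forcing set for $G$, so $|B| = \Z_t(G)$. I claim $B' = B \cup \{u,v\}$ is fault tolerant for $G-e$. Take any $w \in B'$; I must show $B' \setminus \{w\}$ forces all of $G - e$. The idea is to run the forcing process of $B \setminus (\{w\} \cap B)$ in $G$ — which is a zero forcing set of $G$ since $B$ is fault tolerant — and show it can be mimicked in $G-e$ once $u$ and $v$ are already blue. Concretely, at least one of $u, v$ is not equal to $w$, say $u \neq w$; if also $v \neq w$ then $u,v \in B' \setminus \{w\}$ and any force in $G$ that used the edge $uv$ (i.e. a force $u \to v$ or $v \to u$) is unnecessary in $G-e$ because both are already blue, while every other force in the chronological list of $G$ remains valid in $G-e$ because deleting $uv$ only decreases the white neighborhoods of $u$ and $v$, making forces into other vertices still valid. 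If $v = w$, then $v$ must be recovered: since $G-e$ has no isolated vertices, $v$ has a neighbor in $G-e$; after all other vertices are blue (using the same chronological-list argument, with $v$'s forced status in $G$ being irrelevant since $u$ was already blue), that neighbor forces $v$.

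For the bound $\Z_t(G) \leq \Z_t(G-e) + 2$, let $C$ be a minimum fault tolerant zero forcing set for $G-e$ and set $C' = C \cup \{u,v\}$; I claim $C'$ is fault tolerant for $G$. Given $w \in C'$, at least one of $u,v$ lies in $C' \setminus \{w\}$; run the forcing process of $C \setminus (\{w\} \cap C)$ in $G-e$, which fills all of $G-e$. The subtlety is that a force $b \to x$ valid in $G-e$ may fail in $G$ because $x$ acquires the extra white neighbor among $\{u,v\}$. However, $u$ and $v$ are blue from the start in $C'$ (or recoverable as above when one equals $w$), so the offending white neighbor is never actually white at the relevant time — more carefully, process the chronological list of $G-e$ in order, and whenever a force $b \to x$ with $\{x,u\}$ or $\{x,v\}$ an edge of $G$ would be blocked, observe the blocking vertex ($u$ or $v$) is already blue, hence $b \to x$ is still valid in $G$. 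The only genuine obstacle is the case $w \in \{u,v\}$, say $w = v$: then $v$ is white in $G$ and must be forced; here the neighbor $u \in C' \setminus \{w\}$ together with the edge $uv$ of $G$ gives a route to force $v$ once $v$'s other neighbors are blue, though one must check the remaining forcing still completes — this follows because in $G-e$, $C \setminus (\{w\}\cap C)$ already fills everything except possibly needing the now-severed edge, and any force in $G-e$ remains valid in $G$ when $u$ is blue.

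The main obstacle I anticipate is the bookkeeping in the cases $w \in \{u,v\}$: one must argue that the forcing of the remaining $n-1$ vertices still goes through in the modified graph and then that the last vertex $v$ (or $u$) can be recovered via its surviving neighbor, and this requires knowing such a neighbor exists — which is exactly what the "no isolated vertices" hypothesis provides for $G-e$, and which is automatic in $G$ since $G$ is connected with at least two vertices. The factor of $2$ (rather than $1$) is forced precisely because adding a single vertex of $\{u,v\}$ need not suffice to repair faults at the other; I would remark that the twin example alluded to after \Cref{zfb} shows bounds tighter than $2$ cannot hold in general.
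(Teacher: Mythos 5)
Your proposal follows essentially the same route as the paper's proof: in each direction, add both endpoints of $e$ to a minimum fault tolerant zero forcing set of the other graph and transfer the chronological lists of forces, observing that once $u$ and $v$ are blue from the start, the presence or absence of the edge $uv$ cannot invalidate any remaining force. You are in fact more explicit than the paper about the corner case in which the fault falls on $u$ or $v$ itself (the paper only runs its argument for faults $w$ lying in the original set $B$), so the proposal is at least as complete as the published argument.
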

\begin{proof}
    Suppose $G$ and $e$  satisfy the hypotheses, let $u$ and $v$ be the endpoints of $e$, let $B$ be a fault tolerant zero forcing set of $G - e$ and $\mathcal{F}_w$ be a chronological list of forces for $B\setminus \{w\}$ in $G-e$. 
    

    Let $B^{\prime} = B \cup \{u,v\}$. For each $\mathcal{F}_w$, construct $\mathcal{F}_w^{\prime}$ by removing any force received by $u$ or $v$. Each force in $\mathcal{F}_w^{\prime}$ is valid in $G$ since at each time step the number of white neighbors of each vertex of $G$ cannot be higher than the number of white neighbors at that time step in $G-e$.  Thus $B^{\prime}$ is a fault tolerant zero forcing set for $G$, and $|B^{\prime}| \leq |B|+2$. Thus $\Z_t(G) \leq \Z_t(G-e) +2$.

    Now suppose $B$ is a fault tolerant zero forcing set of $G$ with chronological list of forces $\mathcal{F}_w$ for $B\setminus \{w\}$. Construct $B^{\prime} = B \cup \{u,v\}$ and $\mathcal{F}_w^{\prime}$ from $\mathcal{F}_w$ by deleting any force received by $u$ or $v$. As before, each force in $\mathcal{F}_w^{\prime}$ is valid in $G-e$, so $B^{\prime}$ is a fault tolerant zero forcing set of $G-e$. Since $|B^{\prime}| \leq |B|+2$, we have $\Z_t(G-e) \leq \Z_t(G)+2$.
    
\end{proof}

\begin{figure}[ht]
    \centering
    \begin{tikzpicture}
        \node[wzf] (1) at (0,1.5) {$1$};
        \node[wzf] (2) at (1.5,1.5) {$2$};
        \node[wzf] (3) at (3,1.5) {$3$};
        \node[wzf] (4) at (4.5,1.5) {$4$};
        \node[wzf] (5) at (0,3) {$5$};
        \node[wzf] (6) at (1.5,3) {$6$};
        \node[wzf] (7) at (3,3) {$7$};
        \node[wzf] (8) at (4.5,3) {$8$};
        \node[wzf] (9) at (3,4.5) {$9$};

        \draw[thick] (1) -- (2) node [below,midway] {$a$};
        \draw[thick] (1) -- (6) node [near start,left] {$b$};;
        \draw[thick] (5) -- (2);
        \draw[thick] (5) -- (6);
        \draw[thick] (7) -- (6);
        \draw[thick] (2) -- (7);
        \draw[thick] (2) -- (3) node [below,midway] {$e$};;
        \draw[thick] (7) -- (8);
        \draw[thick] (3) -- (4) node [below,midway] {$d$};
        \draw[thick] (7) -- (3) node [left,midway] {$c$};;
        \draw[thick] (4) -- (7);
        \draw[thick] (7) -- (9);
    \end{tikzpicture}
    \caption{The graph in \Cref{ex:gme}.}
    \label{fig:gme}
\end{figure}

\begin{example}\label{ex:gme}
   To show that all of the possibilities of the value of $\Z_t(G)$ and $\Z_t(G-e)$ implied by \Cref{thm:ftgme} can be realized, consider the graph $G$ in \Cref{fig:gme}. Note that no graph of order less than 9 exhibits all possibilities. Computation of $\Z_t(G)=4$ and $\Z_t(G-e)$ for various edges $e$ establishes the following: \\
   To show that all of the possibilities of the value of $\Z_t(G)$ and $\Z_t(G-e)$ implied by \Cref{thm:ftgme} can be realized, consider the graph $G$ in \Cref{fig:gme}. Note that no graph of order less than 9 exhibits all possibilities.8
   Computation of $\Z_t(G)=4$ and $\Z_t(G-e)$ for various edges $e$ establishes the following: \\
   \begin{table}[ht]
       \centering

   \begin{tabular}{c|c}
       $\mathbf{e}$ & $\mathbf{\Z_t(G)-\Z_t(G-e)}$ \\ \hline
         a & 2 \\
         b& 1 \\
         c & 0 \\
         d & -1 \\
         e & -2
   \end{tabular}\\
   \end{table}

\end{example}



    \begin{theorem}\label{ztgslashe}
        Let $G$ be a graph  such that $\Z_t(G)$ exists,  and $e$ an edge of $G$ such that $\Z_t(G\slash e)$ exists. Then

            $$-2 \leq \Z_t(G) -\Z_t(G \slash e) \leq 2.$$
       
    \end{theorem}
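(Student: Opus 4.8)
The plan is to mirror the strategy used in the proof of \Cref{thm:ftgme}, inflating a fault tolerant zero forcing set of one graph to obtain one for the other by adding the two vertices associated with $e$, and showing that the chronological lists of forces transfer after suitable pruning. Let $u,v$ be the endpoints of $e$, and let $z$ denote the vertex of $G\slash e$ obtained by identifying $u$ and $v$; identify the remaining vertices of $G\slash e$ with $V(G)\setminus\{u,v\}$ in the obvious way.

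For the inequality $\Z_t(G) \le \Z_t(G\slash e) + 2$, I would start with a fault tolerant zero forcing set $B$ of $G\slash e$. If $z \notin B$, set $B' = B \cup \{u,v\}$; if $z \in B$, set $B' = (B\setminus\{z\}) \cup \{u,v\}$. Either way $|B'| \le |B| + 2$. Given a fault $w \in B'$, I need a chronological list of forces for $B'\setminus\{w\}$ in $G$. The idea is to take the chronological list $\mathcal{F}$ for the corresponding fault in $B$ (mapping $u$ or $v$ to $z$ if necessary), replace any force $a \to z$ by $a \to u$ (say), delete any force issued \emph{by} $z$ that would no longer be legal, and observe that since both $u$ and $v$ are already blue in $B'$, the only white neighbors any vertex loses by "splitting" $z$ are already filled, so every surviving force remains valid; finally $u$ and $v$ can force into the rest of $G$ exactly as $z$ did. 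The reverse inequality $\Z_t(G\slash e) \le \Z_t(G) + 2$ proceeds symmetrically: starting from a fault tolerant zero forcing set $B$ of $G$, take $B' = (B \cup \{u,v\})$ viewed in $G\slash e$ by collapsing $u,v$ to $z$ (so $|B'|\le |B|+2$), and from a chronological list of forces in $G$ for each fault, delete any force received by $u$ or $v$ and contract; as in \Cref{thm:ftgme}, the white-neighbor count of each vertex in $G\slash e$ never exceeds that in $G$ after these deletions (here using that $z$'s neighborhood is contained in the union of the neighborhoods of $u$ and $v$), so the contracted list is valid.

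The main obstacle I anticipate is the bookkeeping around the vertex $z$ in the forward direction: unlike edge deletion, contraction can \emph{increase} a vertex's degree (that of $z$ versus $u$ or $v$ individually), so one must be careful that a force $a \to z$ in $G\slash e$ — which requires $z$ to be $a$'s unique white neighbor — still corresponds to a legal force in $G$, and that forces issued by $z$ split correctly between $u$ and $v$. The saving grace is precisely that $B'$ contains \emph{both} $u$ and $v$ from the start, so at every time step the set of filled vertices in $G$ projects onto the set of filled vertices in $G\slash e$, and a white neighbor of a vertex in $G$ maps to a white neighbor in $G\slash e$; this makes the correspondence of forces routine once set up. A secondary point to check is that $\Z_t(G\slash e)$ may fail to exist even when $\Z_t(G)$ does (e.g.\ if the contraction creates a small complete graph), which is why the hypothesis explicitly assumes both quantities exist; no isolated-vertex condition is needed here since contraction of a connected graph cannot produce an isolated vertex unless $G\slash e \cong K_1$.

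I would close by remarking, as in \Cref{ex:gme}, that all five values in $\{-2,-1,0,1,2\}$ are realized — for instance $\Z_t(C_4)-\Z_t(C_4\slash e)$ gives one extreme since $C_4 \slash e \cong C_3$ has no fault tolerant zero forcing number... so in fact a different example graph is needed to witness the extreme values, which I would supply by a small explicit computation analogous to \Cref{fig:gme}.
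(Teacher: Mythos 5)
Your overall strategy matches the paper's --- augment a fault tolerant set by $u$ and $v$ and transfer chronological lists of forces --- but the step you wave through as routine is exactly where contraction differs from edge deletion, and as written it fails. In the direction $\Z_t(G\slash e) \le \Z_t(G)+2$, your key claim that ``the white-neighbor count of each vertex in $G\slash e$ never exceeds that in $G$'' is false at the contracted vertex $z$: since $N_{G\slash e}(z)$ is the \emph{union} of $N_G(u)$ and $N_G(v)$, a force $u\rightarrow c$ that is valid in $G$ (so $c$ is $u$'s unique white neighbor) need not contract to a valid force $z\rightarrow c$, because $v$ may simultaneously have a different white neighbor $d$, giving $z$ two white neighbors. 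You delete forces \emph{received} by $u$ or $v$ but never address forces \emph{performed} by them, and $z$ may be unable to perform either one. The paper's proof resolves this by enlarging the set to $\varphi(B)\cup\{c,d\}$, pre-filling the targets of $u$'s and $v$'s forces --- this is where the $+2$ is actually spent in that direction, and it entails a further case analysis because the new vertices $c$ and $d$ must themselves be checked as faults.

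There is a parallel gap in the direction $\Z_t(G) \le \Z_t(G\slash e)+2$: your justification ``since both $u$ and $v$ are already blue in $B'$'' collapses precisely when the fault is $u$ or $v$. Then one of the pair is white, and if the vertex $a$ with $a\rightarrow z$ in the $G\slash e$ list is adjacent in $G$ only to the blue member of the pair, the white member is never colored by your translated list and thereafter blocks every force issued by its neighbors; likewise your reduction to $|B'|\le |B|+1$ when $z\in B$ should make you suspicious, since the paper needs the extra vertex $c$ (the target of $u'$'s force) together with the explicit replacement force $v\rightarrow u$ to handle exactly this situation. Finally, a small factual slip in your closing remark: $C_4\slash e\cong C_3\cong K_3$ \emph{does} have a fault tolerant zero forcing number, namely $\Z_t(K_3)=3$ by \Cref{thm:kn}, so $\Z_t(C_4)-\Z_t(C_4\slash e)=1$ rather than an extreme value; the paper's extremal examples for this theorem are those of \Cref{ztgcontre}.
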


\begin{proof}
    Suppose $u$ and $v$ are the endpoints of $e$, denote the quotient map from $G$ to $G\slash e$ by $\varphi$, and let $u^{\prime}=\varphi(u) = \varphi(v) $. Let $B$ be a minimum fault-tolerant zero-forcing set of $G\slash e$. Suppose first $u^{\prime} \not\in B$. Let $F$ be a chronological list of forces for $B\setminus\{{w}\}$ in $G\slash e$ for some $w \in B$. Then each force in $F$ is valid until $x \rightarrow u^{\prime} $ for some $x \in V (G\slash e)$. Let $F^{\prime}$ be the chronological list of forces that is identical to $F$ until this force. After applying the forces in $F^{\prime}$, $x$ has at most has two white neighbors in $G$, namely $u$ and $v$. 
 
    Note that since $u^{\prime} \not\in B$, $\varphi$ is bijective on $B$.  Let $D = \varphi^{-1} (B) \cup \{{u,v}\}$. Consider $D \setminus \{{w}\}$ for some $w \in D$. After performing all the forces in $F^{\prime}$, $x$ has at most one white neighbor. If $x$ has exactly one white neighbor, without loss of generality $u$, add $x \rightarrow u$ to $F^{\prime}$. 
    
    If $u^{\prime} \rightarrow y \in \mathcal{F}$, for some $y \in V (G/e)$, then $y$ is the only white vertex in $N(u) \cup N(v)$. So $u \rightarrow y$ or $v \rightarrow y$ is valid. Add whichever force is valid to $\mathcal F^{\prime}$, and the rest of the forces after $u^{\prime} \rightarrow y$ in $\mathcal F$ are valid in $G$, so we append them to $\mathcal F^{\prime}$. Thus $D \setminus\{{w}\}$ is a zero forcing set for $G$ with chronological list of force $\mathcal F^{\prime}$. Since $w$ was arbitrary, $D$ is a fault tolerant zero forcing set of $G$.

    Now consider the case where $u ^\prime \in B$. Let $\mathcal F$ be a chronological list of forces for $B \setminus \{w\}$ in $G \slash e$ where $w \neq u ^\prime$. If $u^\prime \rightarrow c \in \mathcal F$ and occurs at position $j$ in $\mathcal F$, then define $B^\prime = \varphi ^ {-1} (B \setminus \{u^\prime\}) \cup \{u,v,c\}$, or $B^\prime = \varphi ^ {-1} (B \setminus \{u^\prime\}) \cup \{u,v\}$ if $u^\prime$ does not perform a force in $\mathcal F$. 
    
    Consider $B^\prime \setminus \{r\}$. If $\{r\} \cap \{u,v\} = \emptyset$, then since $N_{G\slash e} (u^\prime) = N_G(u) \cup N_G(v)$ replace $u^\prime \rightarrow c$ if it exists with $u \rightarrow c$ or $v \rightarrow c$ at position $j$ (at least one of which must be valid). 
    
    Consider $B^ \prime \setminus \{r\}$. If $\{r\} \cap \{u,v\} \neq \emptyset$, then without loss of generality suppose $\{r\} \cap \{u,v\} = \{u\}$. $N(u) \cup N(v)$ has exactly one white vertex, namely $u$. Replace $u^\prime \rightarrow c$, if it exists, with $v \rightarrow u$ at position $j$. Since $|B^\prime| \leq |B| +2$, then $\Z_t (G) \leq \Z_t (G \slash e) +2$.

    For the other inequality, let $B$ be a minimum fault tolerant zero forcing set for $G$. We will construct a fault tolerant zero forcing set $ B^{\prime} \supset\varphi(B)$. Let $\mathcal F$ be a chronological list of forces for $B \setminus \{{w}\}$ in $G$, where $w\in B$. Each force $a \rightarrow b \in \mathcal F$ is valid in $G/e$ as long as $\{{a,b}\} \cap \{{u,v}\} = \emptyset$. Let $\mathcal F^{\prime}$ be the chronological list of forces that is identical to $\mathcal F$ except for the deletion of forces of this form.
    
    There are several cases to consider. Suppose first that $a \rightarrow u\in \mathcal F$, where  $a \neq v $. Since $a$ has one white neighbor in $G$, $a$ has at most one white neighbor in $G/e$, namely $u^{\prime}$. In this case, add $a \rightarrow u^{\prime}$ to $\mathcal F^{\prime}$ in the location where $a\rightarrow u$ was in $\mathcal F$. 
    
    Now suppose $u \rightarrow v \in \mathcal F$, then after the preceding forces in $\mathcal F$ occur, $u$ is blue. Then at this point in $\mathcal F^{\prime}$, $u^{\prime}$ is also blue. Suppose $u \rightarrow c, v \rightarrow d \in \mathcal F$. Let $B^{\prime} = \varphi (B) \cup \{{c,d}\}$ so that the remaining forces for $B \setminus \{{w}\}$ are all valid for $B^{\prime}$ in $G\slash e$ where $  w \not\in \{c, d\}$. Consider $B^{\prime} \setminus \{{c}\}$ (the case of $B^{\prime}\setminus \{d\}$ is similar). 
    
    
    Every force occurring before $u \rightarrow c$ in $\mathcal F$ is valid in $G \slash e$.  Furthermore, since $d$ is filled, $u^{\prime}$ has at most one white neighbor after these forces have been performed. Add $u^{\prime} \rightarrow c$  to $F^{\prime}$ in the location where $u \rightarrow c$ occurred in $\mathcal F$.  Then $\mathcal F^{\prime}$ is a chronological list of forces of $B^{\prime} \setminus \{c\}$ in $G \slash e$, so $B^{\prime} \setminus \{c\}$ is a zero forcing of $G \slash e$.  This resolves the remaining cases to show that $B^{\prime}$ is a fault tolerant zero forcing set of $G\slash e$, and thus $\Z_t (G\slash e) \leq \Z_t (G) +2$.
    \end{proof}

    \begin{figure}[ht]
        \centering
        \begin{tikzpicture}[scale=2]
            \node[wzf] (1) at (0,0) {$1$};
            \node[wzf] (2) at (1,0) {$2$};
            \node[wzf] (3) at (-.5,-1) {$3$};
            \node[wzf] (4) at (1.5,-1) {$4$};
            \node[wzf] (5) at (.5,-1.5) {$5$};
            \node[wzf] (6) at (-1,-2) {$6$};
            \node[wzf] (7) at (1,-2) {$7$};
            \node[wzf] (8) at (2,-2) {$8$};
            \node at (.5,-2.5) {(a)};

            \draw[thick] (1) -- (2);
            \draw[thick] (4) -- (2);
            \draw[thick] (1) -- (3);
            \draw[thick] (3) -- (4) node [below,midway] {$a$};
            \draw[thick] (1) -- (5) node [left,midway] {$c$};
            \draw[thick] (5) -- (2);
            \draw[thick] (3) -- (6) node [left,midway] {$b$};
            \draw[thick] (3) -- (5);
            \draw[thick] (5) -- (7);
            \draw[thick] (7) -- (4) node [left,midway] {$d$};
            \draw[thick] (6) -- (5);
            \draw[thick] (4) -- (8);
            \draw[thick] (5) -- (8);

            \begin{scope}[shift={(3,-.5)}]
                \node[wzf] (1) at (0,0) {$1$};
                \node[wzf] (2) at (1,0) {$2$};
                \node[wzf] (3) at (0,-1) {$3$};
                \node[wzf] (4) at (1,-1) {$4$};
                \node at (.5,-2) {(b)};
                \draw[thick] (1) -- (2) -- (4) -- (3) -- (1);
                \draw[thick] (2) -- (3) node [left,midway] {$f$};

            \end{scope}
            
        \end{tikzpicture}        
        \caption{The graphs in \Cref{ztgcontre}.}
        \label{fig:ztgcontre}
    \end{figure}
    
\begin{example}\label{ztgcontre}
    Consider the graph $G$ in \Cref{fig:ztgcontre} (a). Computation of $\Z_t(G)$ and $\Z_t(G\slash e)$ for various edges $e$ establishes the following: \\

   \begin{table}[ht]
       \centering

   \begin{tabular}{c|c}
       $e$ & $\mathbf{\Z_t(G)-\Z_t(G\slash e)}$ \\ \hline
         $a$ & -2 \\
         $b$& -1 \\
         $c$ & 0 \\
         $d$ & 1
   \end{tabular}\\
   \end{table}

   Meanwhile, for the graph $H$ in \Cref{fig:ztgcontre} (b), we have $\Z_t(H) - \Z_t(H \slash f) =2$. 
\end{example}

    \begin{theorem}\label{ztgminusv}
        Let $G$ be a graph such that $\Z_t(G)$ exists and $v$ a vertex of $G$ whose deletion leaves no isolated vertices. Then

            $$-\deg(v)\leq \Z_t(G) -\Z_t(G-v) \leq 2.$$
       
    \end{theorem}
\begin{proof}
    Let $G$ and $v$ satisfy the hypotheses and let $B$ be a fault tolerant zero forcing set of $G-v$ with corresponding chronological list of forces $\mathcal{F}_w$ for each $B\setminus\{w\}$.  Let $v_1 \in B$, then there is a first force $x\rightarrow w$ in $\mathcal{F}_{v_1}$.  Construct $B^{\prime} = B \cup \{v,w\}$ and for each $u \in B$ we let $\mathcal{F}_u^{\prime} = \mathcal{F}_u \setminus y\rightarrow w$, where $y$ is the vertex of $G-v$ that forces $w$.  Each of these forces is valid and results in all vertices blue when starting with $B^{\prime}\setminus \{u\}$ initially blue.

    It remains to show that $B^{\prime}\setminus \{v\}$ is a zero forcing set.  Before any forces are performed, $v$ is the unique white neighbor of $x$, and therefore $x\rightarrow v$ is a valid first force.  We then proceed with the forces in $\mathcal{F}^{\prime}_{v_1}$ to color the remaining vertices.  Thus $B^{\prime}$ is a fault tolerant zero forcing set with $|B^{\prime}| \leq |B|+2$, so $\Z_t(G) \leq \Z_t(G-v) +2$.

    For the other inequality, let $B$ be a minimum fault tolerant zero forcing set of $G$.  Since $v$ can only force its neighbors, $B \cup N_G(v)$ is a fault tolerant zero forcing set of $G-v$.
\end{proof}
\begin{figure}[!ht]
    \centering
    \begin{tikzpicture}
        \node [wzf] (1) at (0, 0) { $1$};

		\node [wzf] (2) at (1, -1) { $2$};

		\node [wzf] (3) at (1, 1) { $3$};

		\node [wzf] (4) at (2, 0) {$4$ };

		\node [wzf] (5) at (3.5, 0) {$5$ };
		\draw[thick] (4)--(3)--(1)--(2)--(4)--(5);
    \end{tikzpicture}
    \caption{The graph in \Cref{ex:gmv}.}
    \label{fig:gmv}
\end{figure}

\begin{theorem}\label{zttwinv}
    Let $G$ be a graph with twins $u$ and $v$.  Then $0 \leq \Z_t(G) - \Z_t(G-v) \leq 2$.
\end{theorem}
\begin{proof}

    We have $\Z_t(G) - \Z_t(G-v) \leq 2$ by \Cref{ztgminusv}, thus it remains to show that $\Z_t(G-v) \leq \Z_t(G)$.  Let $B$ be a minimum fault tolerant zero forcing set of $G$, and $B$ contains both $u$ and $v$. Consider $B\setminus \{x\}$ for some $x \in B$ and a chronological list of forces $\mathcal F$ for $B \setminus \{v,x\}$. We create $\mathcal F^{\prime}$ from $\mathcal F$ by replacing a force of the form $v\rightarrow w$ with $u \rightarrow w$ if it appears, and if $x=v$, removing the force where $v$ is forced.   Then $\mathcal F^{\prime}$ is a chronological list of forces for $B\setminus \{x,v\}$ in $G-v$ that forces all vertices.  Thus $B \setminus \{v\} $ is a fault tolerant zero forcing set.
\end{proof}

Note that by applying \Cref{thm:exist}, $ \Z_t(G-v)$ exists only if deleting $v$ does not result in any isolated vertices.

\begin{example}\label{ex:gmv}
        Consider the graph $G$ in \Cref{fig:gmv}. Computation of $\Z_t(G)$ and $\Z_t(G-v)$ for various vertices $v$ establishes the following: \\
   \begin{table}[ht]
       \centering

   \begin{tabular}{c|c}
       $\mathbf{v}$ & $\mathbf{\Z_t(G)-\Z_t(G-v)}$ \\ \hline
         5 & 0 \\
         1& 1 \\
         2 & 2 
   \end{tabular}\\
   \end{table}
\end{example}

There is not a useful lower bound on $\Z_t(G) - \Z_t(G-v)$, as the next example shows.

\begin{example}
    Let $G$ be a graph with $V(G) = \{ v_1, v_2 , \dots, v_n\}$ and $E(G) = \{ v_1v_i \ | \ i =2,3,\dots, 2n+1\} \cup \{ v_iv_{i+1} \ | \ i\equiv 0 \mod 2 \}$.  This graph is commonly called a generalized star.  We can calculate $\Z_t(G) = n$, while $\Z_t(G-v_1) = 2n$.
\end{example}

We note that \Cref{zttwinv} could be used to shorten the proofs of \Cref{thm:kn} - \Cref{thm:Kmn}. In the case of graphs with several twins, it also reduces the search space in brute-force calculations of the fault tolerant zero forcing number.

\bibliographystyle{siam}
\bibliography{ZFT}
\newpage
\section*{Author Information}
Asher Brown \\
Winston-Salem State University \\
506 Fieldstone Rd \\
Mooresville, NC 28115 \\
 abrown423@rams.wssu.edu  

 \medskip

\noindent Mark Hunnell\\
Winston-Salem State University \\
2621 Audubon Dr \\
Winston-Salem, NC 27106 \\
hunnellm@wssu.edu 

\medskip

\noindent Za'Kiyah Toomer-Sanders \\
Winston-Salem State University \\
2414 Quantum Ct \\
Winston Salem, NC 27106 \\
ztoomersanders121@rams.wssu.edu

\medskip

\noindent Sarah Weber \\
Winston-Salem State University \\
832 Lockland Ave \\
Winston Salem, NC 27103 \\
sweber124@rams.wssu.edu

\section*{Financial Declarations}

The work of the authors was partially supported by National Science Foundation Award no.2447261.
\end{document}